\newtheorem{theorem}{Theorem}
\newtheorem{lemma}{Lemma}
\newtheorem{proposition}{Proposition}
\def\PP{{\mathbb P}}
\def\d{{\mathrm{d}}}
\def\R{{\mathbb R}}
\def\N{{\mathbb N}}
\def\cF{{\mathfrak F}}
\def\d{d}
\title{The M\'ezard-Parisi equation for matchings in pseudo-dimension $d>1$}
\author{Justin Salez}
\begin{document}
\maketitle
\begin{abstract}
We establish existence and uniqueness of the solution to the cavity equation for the random assignment problem in pseudo-dimension $d>1$, as conjectured by Aldous and Bandyopadhyay (Annals of Applied Probability, 2005) and W\"astlund (Annals of Mathematics, 2012). This fills the last remaining gap in the proof of the original M\'ezard-Parisi prediction for this problem (Journal de Physique Lettres, 1985). 
\vspace{0.3cm}\\
\textbf{Keywords:} recursive distributional equation; random assignment problem; mean-field combinatorial optimization; cavity method.\vspace{0.3cm}\\
\textbf{2010 MSC:} 60C05, 82B44, 90C35.
\end{abstract}
\section{Introduction}
The \emph{random assignment problem} is a now classical problem in probabilistic combinatorial optimization. Given an $n\times n$ array $\{X_{i,j}\}_{1\leq i,j\leq n}$ of \textsc{iid} non-negative random variables, it asks about the statistics of
\begin{eqnarray*}
M_n & := &\min_{\sigma}\sum_{i=1}^nX_{i,\sigma(i)},
\end{eqnarray*}
where the minimum runs over all permutations $\sigma$ of $\{1,\ldots,n\}$. This corresponds to finding a minimum-length perfect matching on the complete bipartite graph $K_{n,n}$ with edge-lengths $\{X_{i,j}\}_{1\leq i,j\leq n}$. 
Using the celebrated \emph{replica symmetry ansatz} from statistical physics, M\'ezard and Parisi \cite{MePa85,MePa86,MePa87} made a remarkably precise prediction concerning the regime where $n$ tends to infinity while the distribution of $X_{i,j}$ is kept fixed and satisfies 
\begin{eqnarray*}
\PP\left(X_{i,j}\leq x\right) \ \sim \ x^{\d} & \textrm{ as } & x\to 0^+,
\end{eqnarray*}
for some exponent $0<\d<\infty$. Specifically,  they conjectured that 
\begin{eqnarray}
\label{prediction}
\frac{M_n}{n^{1-1/\d}} & \xrightarrow[n\to\infty]{\PP} & -\d\int_{\R}f(x)\ln{f(x)}dx,
\end{eqnarray}
where the function $f\colon\R\to[0,1]$ solves the so-called \emph{cavity equation}:
\begin{eqnarray}
\label{rde}
f(x) & = & \exp\left(-\int_{-x}^{+\infty}\d(x+y)^{\d-1}f(y)dy\right).
\end{eqnarray}
Aldous \cite{Al92,Al01} proved this conjecture in the special case $\d=1$, where the term $(x+y)^{\d-1}$ simplifies and makes the cavity equation exactly solvable, yielding
\begin{eqnarray*}
f(x) =\frac{1}{1+e^x} & \textrm{ and } & -\d\int_{\R}f(x)\ln{f(x)}dx=\frac{\pi^2}{6}.
\end{eqnarray*}
Since then, several alternative proofs have been found \cite{LiWa04,NaBaSh05,Wa09}. 
This stands in sharp contrast with the case $\d\neq 1$, where showing that the M\'ezard-Parisi equation (\ref{rde}) admits a unique solution has until now remained an open problem \cite[Open Problem 63]{AlBa05}. W\"astlund \cite{Wa12} circumvented this issue by considering instead the truncated equation 
\begin{eqnarray}
\label{truncated}
f_\lambda(x) & = & \exp\left(-\int_{-x}^{\lambda}\d(x+y)^{\d-1}f_\lambda(y)dy\right),\qquad 0<\lambda<\infty.
\end{eqnarray}
Using an ingenious game-theoretical interpretation of this equation, he showed the existence of a unique, global attractive solution $f_\lambda\colon[-\lambda,\lambda]\to[0,1]$ for every $0<\lambda <\infty$, provided $d\geq 1$. He then used this fact to establish that
\begin{eqnarray}
\label{interchange}
\frac{M_n}{n^{1-1/\d}} & \xrightarrow[n\to\infty]{\PP} & {\lim_{\lambda\to+\infty}} \uparrow -\d\int_{-\lambda}^\lambda f_\lambda(x)\ln{f_\lambda(x)}dx.
\end{eqnarray}
W\"astlund \cite{Wa12} explicitly left open the problem of completing the proof of the original M\'ezard-Parisi prediction by showing (i) that  the untruncated cavity equation admits a unique solution $f$ and (ii) that $f_\lambda\to f$ as $\lambda\to\infty$. The purpose of this short paper is to establish this conjecture.

\begin{theorem}
\label{th:main}
For $\d>1$, the M\'ezard-Parisi equation (\ref{rde}) admits a unique solution $f\colon\R\to[0,1]$. Moreover, $f_\lambda\to f$ pointwise as $\lambda\to+\infty$, and
\begin{eqnarray*}
\int_{-\lambda}^\lambda f_\lambda(x)\ln{f_\lambda(x)}dx & \xrightarrow[\lambda\to+\infty]{} & \int_{\R}f(x)\ln{f(x)}dx.
\end{eqnarray*}
Consequently, the two limits in (\ref{prediction}) and (\ref{interchange}) coincide. 
\end{theorem}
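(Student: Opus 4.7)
The plan is to construct $f$ as the monotone limit of W\"astlund's truncated solutions $f_\lambda$, and then to bootstrap his finite-scale uniqueness---together with the anti-monotonicity of the cavity operator $(Tf)(x) = \exp(-\int_{-x}^{+\infty} d(x+y)^{d-1} f(y) dy)$---to uniqueness in the untruncated setting. The first milestone is to show that $\lambda \mapsto f_\lambda(x)$ is non-increasing on the common domain $[-\lambda,\lambda]$. Given $\lambda < \mu$, I extend $f_\lambda$ to $\R$ by setting $\hat{f}_\lambda \equiv 1$ outside $[-\lambda,\lambda]$; a direct split-integration computation then yields $T_\mu\hat{f}_\lambda \leq \hat{f}_\lambda$ on $[-\mu,\mu]$, so $\hat f_\lambda$ is a supersolution of $T_\mu$. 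Combined with W\"astlund's global attractivity of $f_\mu$ for $T_\mu$, this would give $f_\mu \leq \hat{f}_\lambda$ on $[-\mu,\mu]$, hence $f_\mu \leq f_\lambda$ on $[-\lambda,\lambda]$. Setting $f(x) := \lim_{\lambda\to\infty} f_\lambda(x)$ then produces the candidate limit.

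To verify that $f$ solves (\ref{rde}), I pass to the limit in $-\ln f_\lambda(x) = \int_{-x}^{\lambda} d(x+y)^{d-1} f_\lambda(y) dy$ via dominated convergence. The crucial input is a uniform super-polynomial tail estimate $f_\lambda(y) \leq e^{-c y^d}$ for $y \geq y_0$, which I derive from $-\ln f_\lambda(y) \geq \int_{-y}^{0} d(y+z)^{d-1} f_\lambda(z) dz \geq f_\lambda(0) \cdot y^d$ (using that $f_\lambda$ is non-increasing in its argument), combined with a uniform lower bound $f_\lambda(0) \geq c_0 > 0$. The latter should follow from the a priori boundedness of $\int_{-\lambda}^{\lambda} |f_\lambda \ln f_\lambda| dy$ guaranteed by W\"astlund's (\ref{interchange}), together with the monotonicity in $\lambda$ to rule out degeneration as $\lambda \to \infty$.

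For uniqueness, let $g$ be any solution of (\ref{rde}). Discarding the nonnegative tail $\int_\lambda^{+\infty} d(x+y)^{d-1} g(y) dy$ shows that $g|_{[-\lambda,\lambda]}$ is a subsolution of $T_\lambda$. By the mirror of the monotonicity argument, $g \leq f_\lambda$ on $[-\lambda,\lambda]$, hence $g \leq f$ in the limit; anti-monotonicity of $T$ applied to the two fixed points then forces $g = Tg \geq Tf = f$, so $g = f$. The entropy convergence is handled by dominated convergence: on $[0,\infty)$ the tail estimate together with $|t\ln t| \leq t^{1/2}$ provides an integrable majorant, while on $(-\infty,0]$ the bound $1 - f_\lambda(-y) \leq -\ln f_\lambda(-y) \lesssim e^{-c y^d}$---obtained by plugging the positive-side tail into $-\ln f_\lambda(-y) = \int_y^\lambda d(z-y)^{d-1} f_\lambda(z) dz$ and invoking Bernoulli's inequality---supplies one.

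The main obstacle is deducing the pointwise comparison $f_\mu \leq \hat{f}_\lambda$ (and the corresponding $g \leq f_\lambda$) from the supersolution/subsolution property. For an anti-monotone operator this is not automatic: the iterates $T_\mu^n \hat f_\lambda$ zigzag and neither they nor the squared iterates $T_\mu^{2n} \hat f_\lambda$ are a priori monotone from a supersolution, and simple examples show that supersolutions of anti-monotone maps need not lie above fixed points. The resolution should lean on the explicit exponential-integral structure of $T_\mu$: one verifies directly that $T_\mu^2 \hat{f}_\lambda \leq \hat{f}_\lambda$---exploiting that $\hat f_\lambda$ and $T_\mu \hat f_\lambda$ agree on a substantial subset of $[-\mu,\mu]$---after which the monotone squared iteration $T_\mu^{2n}\hat{f}_\lambda$ decreases to the unique $T_\mu^2$-fixed point $f_\mu$ (by W\"astlund's attractivity), delivering the desired comparison.
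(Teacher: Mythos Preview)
Your tail estimates and the dominated--convergence treatment of the entropy integral are essentially what the paper does (the paper makes your bootstrap explicit as $f_\lambda(0)\ge e^{-1}$, giving $f_\lambda(x)\le e^{-x^d/e}$ for $x\ge 0$), so that portion is fine. The substantive gap is exactly the one you flag in your last paragraph, and your proposed resolution does not close it.

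Both pillars of your argument---monotonicity of $\lambda\mapsto f_\lambda$ and the uniqueness step---rest on the claim that a supersolution (resp.\ subsolution) of an anti--monotone operator dominates (resp.\ is dominated by) its fixed point. This fails already in two dimensions: for $T(x,y)=(e^{-y},e^{-x})$ on $[0,\infty)^2$ the unique fixed point is $(\omega,\omega)$ with $\omega=e^{-\omega}\approx 0.567$, yet $(0.9,0.1)$ is a genuine subsolution (since $T(0.9,0.1)\approx(0.905,0.407)\ge(0.9,0.1)$) that is \emph{not} coordinatewise below $(\omega,\omega)$. So ``$g\le T_\lambda g$ on $[-\lambda,\lambda]$'' simply does not imply $g\le f_\lambda$; it only traps the \emph{iterates} $T_\lambda^n g$ between the even and odd towers from $\mathbf 0$, which says nothing about $g$ itself. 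Your fallback---checking $T_\mu^2\hat f_\lambda\le\hat f_\lambda$ by hand---is not substantiated: with your extension $\hat f_\lambda\equiv 1$ outside $[-\lambda,\lambda]$ (which, incidentally, is not even non-increasing, so $\hat f_\lambda\notin\cF_\mu$), one has $T_\mu\hat f_\lambda(y)<f_\lambda(y)$ strictly on all of $[-\lambda,\lambda]$ because of the extra tail $\int_\lambda^\mu d(x+y)^{d-1}\,dy>0$, so the two functions do not ``agree on a substantial subset'' anywhere, and there is no evident mechanism forcing the $[\lambda,\mu]$--contribution in $T_\mu^2\hat f_\lambda$ to compensate uniformly in $x$. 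The mirror argument for the uniqueness step inherits the same defect.

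The paper bypasses both issues by a different route. It never claims monotonicity in $\lambda$: existence of a limit point comes from Helly's selection principle plus the uniform tail bound, and any subsequential limit is shown to solve~(\ref{rde}). Uniqueness for~(\ref{rde}) is then proved by a genuinely one--dimensional shift argument: for two solutions $f,g$ one first produces a finite $t\ge 0$ with $f(\cdot+t)\le g\le f(\cdot-t)$ everywhere, takes the minimal such $t$, and derives a contradiction from $t>0$ by analysing the continuous, strictly increasing function $\Delta(t)=\int_\R\bigl(f(y-t)-g(y)\bigr)\,dy$ together with the fixed-point relation. Once uniqueness is established, full convergence $f_\lambda\to f$ (not merely subsequential) follows, and the entropy convergence is exactly your dominated-convergence argument.
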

In addition, we provide a short alternative proof of the crucial result of \cite{Wa12} that the truncated equation (\ref{truncated}) admits a unique, attractive solution. 
\paragraph{Remark 1.}
Very recently, a proof of uniqueness for the truncated equation (\ref{truncated}) has been announced  \cite{La14} for the case $0<\d<1$. It would be interesting to see if the result of the present paper can be extended to this regime. 
\paragraph{Remark 2.}
For a random variable $Z$ with $\PP\left(Z>x\right)=f(x)$, the cavity equation (\ref{rde}) simply expresses the fact that $Z$ solves the distributional identity 
\begin{eqnarray}
\label{distrib}
Z & \stackrel{d}{=} & \min_{i\geq 1}\left\{\xi_i-Z_i\right\},
\end{eqnarray}
where $\{\xi_i\}_{i\geq 1}$ is a Poisson point process with intensity $\d x^{\d-1}\partial x$ on $[0,\infty)$, and $\{Z_i\}_{i\geq 1}$ are \textsc{iid} with the same distribution as $Z$, independent of $\{\xi_i\}_{i\geq 1}$. Such \emph{recursive distributional equations} arise naturally in a variety of models from statistical physics, and the question of existence and uniqueness of solutions plays a crucial role for the rigorous understanding of those models. We refer the interested reader to the comprehensive surveys \cite{Al04,AlBa05} for more details. In particular, \cite[Section 7.4]{AlBa05} contains a detailed discussion on equation (\ref{distrib}), and \cite[Open Problem 63]{AlBa05} raises explicitly the uniqueness issue.  We note that the refined question of \emph{endogeny} remains a challenging open problem. Recursive distributional equations for other mean-field combinatorial optimization problems have been analysed in e.g. \cite{GaNoSw06,PaWa12,Kh14}. 

\paragraph{}The remainder of the paper is organized as follows. Section 2 deals with the truncated equation (\ref{truncated}) for fixed $0<\lambda<\infty$ and is devoted to the alternative analytical proof that there is a unique, globally attractive solution $f_\lambda$. Section 3 prepares the $\lambda\to\infty$ limit by providing uniform controls on  the family $\{f_\lambda\colon0<\lambda<\infty\}$ and by characterizing the possible limit points. This reduces the proof of Theorem \ref{th:main} to establishing uniqueness in the un-truncated M\'ezard-Parisi equation ($\lambda=\infty$), which is done in Section 4.

\section{The truncated cavity equation $(\lambda<\infty)$}
Fix a parameter $0<\lambda<\infty$. On the set $\cF$ of non-increasing functions $f\colon[-\lambda,{\lambda}]\to[0,1]$, define an operator $T$ by
\begin{eqnarray}
\label{T}
(T f)(x) & = & \exp\left(-\d\int_{-x}^\lambda(x+y)^{\d-1} f(y)dy\right).
\end{eqnarray}
The purpose of this section is to give a short and purely analytical proof of the following result, which was the main technical ingredient in \cite{Wa12} and was therein established using an ingenious game-theoretical framework. 

\begin{proposition}
\label{pr:truncated}
$T$ admits a unique fixed point $f_\lambda$ and it is attractive in the sense that $|T^n f(x)-f_\lambda(x)|\xrightarrow[n\to\infty]{} 0,$ uniformly in both $x\in[-\lambda,\lambda]$ and $f\in\cF$.
\end{proposition}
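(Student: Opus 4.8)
The plan is to exploit the order structure of $T$. First one checks that $T$ maps $\cF$ into itself: the range lies in $(0,1]$ since the integrand is non-negative, and since $d>1$ the function $x\mapsto(x+y)^{d-1}\ind_{\{y\ge-x\}}$ is non-decreasing, so enlarging $x$ only enlarges the exponent and $Tf$ is non-increasing. The decisive structural fact is that $T$ is \emph{order-reversing}, $f\le g\Rightarrow Tf\ge Tg$, so $T^2$ is order-preserving. Because $\mathbf 0\le T^2\mathbf 0$ and $T^2\mathbf 1\le\mathbf 1$, iterating the monotone map $T^2$ gives $T^{2n}\mathbf 0\uparrow f_-$ and $T^{2n}\mathbf 1\downarrow f_+$ with $f_-\le f_+$; dominated convergence makes both limits fixed points of $T^2$, Dini's theorem upgrades the convergence to uniform, and the maximality/minimality of $f_+,f_-$ among the $T^2$-fixed points of $\cF$ (apply $T^{2n}$ to $\mathbf 0\le h\le\mathbf 1$) lets one squeeze an arbitrary orbit $T^nf$ between $T^{2n}\mathbf 0$ and $T^{2n}\mathbf 1$ after one step, since $\mathbf 0\le Tf\le\mathbf 1$. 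Thus the whole proposition — uniqueness of the fixed point \emph{and} uniform attractiveness — reduces to the single claim $f_-=f_+$.

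To prove $f_-=f_+$ I would set $R:=\log(f_+/f_-)\ge0$, a continuous function vanishing at $-\lambda$. Writing $A_g(x):=d\int_{-x}^\lambda(x+y)^{d-1}g(y)\,dy$, the identities $-\log f_-=A_{Tf_-}$, $-\log f_+=A_{Tf_+}$ together with the elementary bound $e^{-u}-e^{-v}\le e^{-u}(v-u)$ for $u\le v$ turn the fixed-point relations into a closed chain of linear inequalities: with $Q:=\log(Tf_-/Tf_+)\ge0$ one obtains $R\le UQ$ and $Q\le UR$, where
\[
(U\phi)(x):=d\int_{-x}^\lambda(x+y)^{d-1}f_+(y)\,\phi(y)\,dy
\]
(here one uses $Tf_-\le f_+$, which holds because $Tf_-$ is again a $T^2$-fixed point). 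Iterating, $0\le R\le U^{2n}R$ for every $n$, and since $R$ is bounded it is enough to show that the positive linear operator $U$ on $C([-\lambda,\lambda])$ has spectral radius $<1$; equivalently, to exhibit a strictly positive supersolution, i.e.\ $h>0$ with $Uh\le\theta h$ for some $\theta<1$, for then $R\le(\sup R/h)\,\theta^{2n}h\to0$.

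The main obstacle is exactly this last step: constructing such an $h$ and verifying the inequality, which is where the a priori control of $f_+$ and the hypothesis $d>1$ enter decisively. The ingredients I would assemble are: $f_+\le1$ with $f_+(-\lambda)=1$; the crude lower bound $f_-\ge\exp(-(\cdot+\lambda)^d)$ (from $Tf_-\le1$), hence $f_+\le\exp(-A_{f_-})\le\exp\bigl(-d\int_{-\cdot}^\lambda(\cdot+y)^{d-1}e^{-(y+\lambda)^d}\,dy\bigr)$; and a bootstrap of these estimates against one another to obtain genuine (super-exponential) decay of $f_+$ away from $-\lambda$. With that decay in hand, the substitution $t=x+y$ followed by $r=(y+\lambda)^d$ collapses $Uh$, for a well-chosen $h$, to a Gaussian-type integral: the naive choice $h=1/f_+$ — for which $Uh(x)=(x+\lambda)^d$ and the condition reads $\sup_x f_+(x)(x+\lambda)^d<1$ — is too lossy for large $\lambda$, so one must instead take $h$ to be an exponential weight tuned to the decay profile of $f_+$, yielding a constant $\theta<1$. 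It is precisely $d>1$ that makes $(x+y)^{d-1}$ bounded and vanishing on the diagonal $y=-x$, so that $U$ is a genuine Volterra-type compact operator for which the supersolution criterion — and hence the decay of the iterates $U^{2n}R$ — actually works. Once $\rho(U)<1$ is established, $R\equiv0$, so $f_-=f_+$ and the proposition follows.
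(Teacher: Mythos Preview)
Your reduction to the single claim $f_-=f_+$ via the order-reversing structure of $T$ is correct and matches the paper exactly (the paper iterates from $\mathbf 0$ only, but since $T\mathbf 0\equiv\mathbf 1$ your $T^{2n}\mathbf 1$ is the paper's $T^{2n+1}\mathbf 0$, so the two setups coincide; in particular $Tf_-=f_+$ and $Tf_+=f_-$, which incidentally makes your $Q$ equal to $R$ and the detour through $Q$ unnecessary).

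The genuine gap is in your argument for $f_-=f_+$. The linear inequality $R\le UR$ is correctly derived, but the claim that $U$ has spectral radius $<1$ is neither proved nor correctly motivated. The operator
\[
(U\phi)(x)=d\int_{-x}^\lambda(x+y)^{d-1}f_+(y)\,\phi(y)\,dy
\]
is \emph{not} Volterra-type: the integration region $\{y\ge -x\}$ is the anti-diagonal half-square, not a Volterra triangle $\{y\le x\}$ or $\{y\ge x\}$, and iterating $U$ does not shrink the domain (for $y$ near $\lambda$ the inner integral in $U^2$ again ranges over all of $[-\lambda,\lambda]$). The vanishing of $(x+y)^{d-1}$ on the boundary $y=-x$ plays no role in this. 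You yourself note that the natural supersolution $h=1/f_+$ fails for large $\lambda$ (indeed $f_+(0)\,\lambda^d\ge \lambda^d/e\to\infty$), and the promised ``exponential weight tuned to the decay profile of $f_+$'' is never exhibited. Whether $\rho(U)<1$ even holds is not clear from your sketch, and establishing it looks at least as hard as the original problem.

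The paper dispatches $f_-=f_+$ by a short symmetry argument that avoids spectral considerations entirely. Differentiating $(Tf)(x)$ under the integral sign (this uses $d>1$) and then integrating over $x\in[-\lambda,\lambda]$ yields the identity
\[
1-(Tf)(\lambda)\;=\;d(d-1)\iint_{[-\lambda,\lambda]^2}(x+y)^{d-2}\,\ind_{\{x+y\ge0\}}\,(Tf)(x)\,f(y)\,dx\,dy.
\]
For $f=f^\pm$ the pair $\{f,Tf\}$ is $\{f^+,f^-\}$ in either case, so the symmetric right-hand side takes the \emph{same} value for $f=f^+$ and $f=f^-$. Hence $(Tf^+)(\lambda)=(Tf^-)(\lambda)$, i.e.
\[
\int_{-\lambda}^{\lambda} d(\lambda+y)^{d-1}f^+(y)\,dy=\int_{-\lambda}^{\lambda} d(\lambda+y)^{d-1}f^-(y)\,dy,
\]
and since $f^-\le f^+$ pointwise against a strictly positive weight, this forces $f^-=f^+$.
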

\begin{proof}
Write $f\leq g$ to mean $f(x)\leq g(x)$ for all $x\in[-\lambda,\lambda]$. 
In particular, 
\begin{eqnarray*}
{\bf 0}\leq f \leq T{\bf 0}
\end{eqnarray*}
 for every $f\in\cF$, 
where $\bf{0}$ denotes the constant-zero function. Note also that the operator $T$ is  non-increasing, in the sense that
 \begin{eqnarray*}
f\leq g & \Longrightarrow & T f\geq T g. 
\end{eqnarray*}
Those two observations imply that the sequences $\{T^{2n}{\bf 0}\}_{n\geq 0}$ and $\{T^{2n+1}{\bf 0}\}_{n\geq 0}$ are respectively non-decreasing and non-increasing, and that their respective pointwise limits $f^{-}$ and $f^+$ satisfy
 \begin{eqnarray*}
f^- \ \leq \ \liminf_{n\to\infty}T^n f & \leq & \limsup_{n\to\infty}T^n f \ \leq \ f^+,
\end{eqnarray*}
for any $f\in\cF$. 
Moreover, the dominated convergence Theorem ensures that $T$ is continuous with respect to pointwise convergence, allowing to pass to the limit in the identity $T^{n+1} {\bf 0}=T(T^n{\bf 0})$ to deduce that
 \begin{eqnarray}
 \label{pm}
Tf^- = f^+ & \textrm{ and } & Tf^+ = f^-.
\end{eqnarray}
Therefore, the proof boils down to the identity $f^-=f^+$, which we now establish. 
 By definition, we have for any $f\in\cF$,  
  \begin{eqnarray*}
(T f)(x) & = & \exp\left(-\d\int_{-\lambda}^{\lambda} (x+y)^{\d-1}{\bf 1}_{(x+y\geq 0)}f(y)dy\right).
\end{eqnarray*}
Since $\d>1$, we may differentiate under the integral sign to obtain
 \begin{eqnarray*}
(T f)'(x) & = & -\d(\d-1)(T f)(x)\int_{-\lambda}^{\lambda} (x+y)^{\d-2}{\bf 1}_{(x+y\geq 0)} f(y)dy.
\end{eqnarray*}
Integrating over $\left[-\lambda,\lambda\right]$ and noting that $(T f)\left(-\lambda\right)=1$, we conclude that
 \begin{eqnarray*}
1-(T f)\left(\lambda\right)& = & \d(\d-1)\iint_{\left[-\lambda,\lambda\right]^2} (x+y)^{\d-2}{\bf 1}_{(x+y\geq 0)}(T f)(x)f(y)dx dy.
\end{eqnarray*}
Let us now specialize to $f=f^\pm$. In both cases, the right-hand side is 
 \begin{eqnarray*}
 \d(\d-1)\iint_{\left[-\lambda,\lambda\right]^2} (x+y)^{\d-2}{\bf 1}_{(x+y\geq 0)}f^+(x)f^-(y)dx dy,
\end{eqnarray*}
by (\ref{pm}). Therefore, we have $(T f^+)\left(\lambda\right)=(T f^-)(\lambda)$, i.e. 
 \begin{eqnarray*}
 \int_{-\lambda}^{\lambda} \d(\lambda+y)^{\d-1}f^+(y)dy & = &  \int_{-\lambda}^{\lambda} \d(\lambda+y)^{\d-1}f^-(y)dy.
 \end{eqnarray*}
 Since we already know that $f^-\leq f^+$, this forces $f^-= f^+$ almost-everwhere on $[-{\lambda},\lambda]$, and hence everywhere by continuity. Finally, the convergence $T^n{\bf 0}\to f_\lambda=f^\pm$ is automatically uniform on $[-\lambda,\lambda]$, by Dini's Theorem.
\end{proof}

\section{Relative compactness of solutions $(\lambda\to\infty)$}
In order to study properties of the family $\{f_\lambda\colon 0<\lambda<\infty\}$, we extend the domain of $f_\lambda$ to $\R$ by setting $f_\lambda(x)=1$ for $x\leq -\lambda$ and $f_\lambda(x)=0$ for $x>\lambda$. 

\begin{proposition}[Uniform bounds]
\label{pr:uniform}
For all $0<\lambda<\infty$ and $x\geq 0$,
\begin{eqnarray*}
f_\lambda(x) & \leq &  \exp\left(-\frac{x^\d}{e}\right)\\
1-f_\lambda(-x) & \leq &  \exp\left(-\frac{x^\d}{e}\right)\\
f_\lambda(-x)\ln\frac{1}{f_\lambda(-x)}  & \leq & \exp\left(-\frac{x^\d}{e}\right)\\
f_\lambda(x)\ln\frac{1}{f_\lambda(x)}  & \leq & \left(1+ \frac{x^\d}{e}\right)\exp\left(-\frac{x^\d}{e}\right).
\end{eqnarray*}
\end{proposition}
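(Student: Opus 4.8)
The plan is to prove the four bounds via short estimates on the single function $h_\lambda:=-\ln f_\lambda$, which by the fixed-point identity $f_\lambda=Tf_\lambda$ and (\ref{T}) satisfies $h_\lambda(x)=\d\int_{-x}^{\lambda}(x+y)^{\d-1}f_\lambda(y)\,dy$ for $x\in[-\lambda,\lambda]$. For $x>\lambda$ all four inequalities are trivial, since there $f_\lambda(x)=0$ and $f_\lambda(-x)=1$, so it suffices to treat $x\in[0,\lambda]$.

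The crucial step, which I would carry out first, is the pointwise bound
\begin{eqnarray*}
h_\lambda(x)\ \geq\ h_\lambda(0)+f_\lambda(0)\,x^{\d},\qquad\text{i.e.}\qquad f_\lambda(x)\ \leq\ f_\lambda(0)\,\exp\!\big(-f_\lambda(0)\,x^{\d}\big),\qquad x\in[0,\lambda].
\end{eqnarray*}
To obtain it, substitute $u=x+y$ and split the integral at $u=x$, which gives $h_\lambda(x)=\d\int_{0}^{x}u^{\d-1}f_\lambda(u-x)\,du+\d\int_{0}^{\lambda}(v+x)^{\d-1}f_\lambda(v)\,dv$. In the first integral $u-x\in[-x,0]$ and $f_\lambda$ is non-increasing, so $f_\lambda(u-x)\geq f_\lambda(0)$ and that integral is at least $f_\lambda(0)x^{\d}$; in the second, $(v+x)^{\d-1}\geq v^{\d-1}$ since $\d>1$, so it is at least $\d\int_{0}^{\lambda}v^{\d-1}f_\lambda(v)\,dv=h_\lambda(0)$.

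Next I would feed this back into the identity $h_\lambda(0)=\d\int_{0}^{\lambda}y^{\d-1}f_\lambda(y)\,dy$: using the displayed estimate and the substitution $t=y^{\d}$,
\begin{eqnarray*}
h_\lambda(0)\ \leq\ \d\,f_\lambda(0)\int_{0}^{\infty}y^{\d-1}e^{-f_\lambda(0)y^{\d}}\,dy\ =\ f_\lambda(0)\int_{0}^{\infty}e^{-f_\lambda(0)t}\,dt\ =\ 1,
\end{eqnarray*}
hence $f_\lambda(0)\geq e^{-1}$. The first bound is then immediate, because $h_\lambda(x)\geq f_\lambda(0)x^{\d}\geq x^{\d}/e$. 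For the second bound I would use $1-f_\lambda(-x)\leq h_\lambda(-x)=\d\int_{x}^{\lambda}(y-x)^{\d-1}f_\lambda(y)\,dy$ and insert the \emph{same} estimate together with $(y-x)^{\d-1}\leq y^{\d-1}$; the prefactor $f_\lambda(0)$ again cancels the resulting Gamma-type integral, giving $1-f_\lambda(-x)\leq f_\lambda(0)\,\d\int_{x}^{\infty}y^{\d-1}e^{-f_\lambda(0)y^{\d}}\,dy=e^{-f_\lambda(0)x^{\d}}\leq e^{-x^{\d}/e}$.

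Finally, the third and fourth bounds follow from the second and first by elementary one-variable inequalities: $t\ln\frac1t\leq 1-t$ on $(0,1]$ yields $f_\lambda(-x)\ln\frac1{f_\lambda(-x)}\leq 1-f_\lambda(-x)\leq e^{-x^{\d}/e}$; and writing $f_\lambda(x)\ln\frac1{f_\lambda(x)}=h_\lambda(x)e^{-h_\lambda(x)}$, the fact that $s\mapsto(1+s)e^{-s}$ is non-increasing on $[0,\infty)$ together with $h_\lambda(x)\geq x^{\d}/e$ gives $h_\lambda(x)e^{-h_\lambda(x)}\leq(1+h_\lambda(x))e^{-h_\lambda(x)}\leq(1+x^{\d}/e)e^{-x^{\d}/e}$. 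The step I expect to require the most care is the self-consistency computation: the sharp constant $1/e$ appears precisely because the prefactor $f_\lambda(0)$ is retained throughout, so that $\d\int_{0}^{\infty}y^{\d-1}f_\lambda(0)e^{-f_\lambda(0)y^{\d}}\,dy=1$ \emph{exactly}; working instead with the weaker inequality $h_\lambda(x)\geq f_\lambda(0)x^{\d}$ alone would leave $f_\lambda(0)$ undetermined and the whole scheme would collapse.
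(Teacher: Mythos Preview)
Your proof is correct and follows essentially the same route as the paper: obtain the key estimate $f_\lambda(x)\leq f_\lambda(0)\exp(-f_\lambda(0)x^{\d})$ by splitting the fixed-point integral at $y=0$, feed it back to get $f_\lambda(0)\geq e^{-1}$, and then read off the four bounds. The only noticeable difference is cosmetic: for the fourth inequality you use the monotonicity of $s\mapsto(1+s)e^{-s}$ on $[0,\infty)$ applied to $s=h_\lambda(x)$, which is a slightly slicker substitute for the paper's case split on whether $\exp(-x^{\d}/e)\leq e^{-1}$.
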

\begin{proof}
Let $0<\lambda<\infty$. We may assume that $x\in[0,\lambda]$, otherwise the above bounds are trivial. By definition, we have
\begin{eqnarray}
\label{flambda}
f_\lambda(x) & = &  \exp\left(-\int_{-x}^\lambda\d(x+y)^{\d-1}f_\lambda(y)dy\right).
\end{eqnarray}
Now, since $x\geq 0$ and $f_\lambda$ is non-increasing, we have
\begin{eqnarray*}
\int_{-x}^{\lambda}(x+y)^{\d-1}f_\lambda(y)dy & = & \int_{-x}^0(x+y)^{\d-1}f_\lambda(y)dy+\int_{0}^\lambda (x+y)^{\d-1}f_\lambda(y)dy\\
& \geq & f_\lambda(0)\frac{x^\d}{\d}+\int_{0}^\lambda  y^{\d-1}f_\lambda(y)dy.
\end{eqnarray*}
Applying $u\mapsto \exp(-\d u)$ to both sides and using (\ref{flambda}), we obtain
\begin{eqnarray}
\label{upperb}
f_\lambda(x) & \leq & f_\lambda(0)\exp(-f_\lambda(0)x^\d).
\end{eqnarray}
In turn, this inequality implies that for all $x\geq 0$,
\begin{eqnarray*}
\int_{x}^{\lambda}\d(y-x)^{\d-1}f_\lambda(y)dy & \leq & f_\lambda(0)\int_{x}^{+\infty}\d y^{\d-1}e^{-f_\lambda(0)y^\d}dy
\ = \ 
\exp(-f_\lambda(0)x^\d).
\end{eqnarray*}
Applying $u\mapsto \exp(-u)$ to both sides, we conclude that
\begin{eqnarray}
\label{lowerb}
f_\lambda(-x) & \geq & \exp\left(-e^{-f_\lambda(0)x^\d}\right).
\end{eqnarray}
In particular, taking $x=0$ yields $f_\lambda(0)\geq e^{-1}$, and reinjecting 
this into (\ref{upperb}) and (\ref{lowerb}) easily yields the first three claims. For the last one, observe that $u\mapsto u\ln\frac{1}{u}$ increases on $[0,e^{-1}]$ and decreases on $[e^{-1},1]$,  with the value at $u=e^{-1}$ being precisely $e^{-1}$. Therefore, if $\exp(-x^\d/e)\leq e^{-1}$, we may use the bound $f_\lambda(x)\leq \exp(-x^\d/e)$ to deduce that
\begin{eqnarray*}
f_\lambda(x)\ln \frac{1}{f_\lambda(x)} & \leq & \frac{x^\d}{e}\exp\left(-\frac{x^\d}e\right).
\end{eqnarray*}
On the other hand, if $\exp(-x^\d/e)\geq e^{-1}$, then 
\begin{eqnarray*}
f_\lambda(x)\ln \frac{1}{f_\lambda(x)} & \leq & e^{-1}\ \leq \ \exp\left(-\frac{x^\d}{e}\right).
\end{eqnarray*}
In both cases, the last inequality holds, and the proof is complete.
\end{proof}

\begin{proposition}
\label{pr:tight}
The family $\left\{f_\lambda\colon 0<\lambda<\infty\right\}$ is relatively compact with respect to the topology of uniform convergence on $\R$, and any sub-sequential limit as $\lambda\to\infty$ must solve the cavity equation (\ref{rde}). 
\end{proposition}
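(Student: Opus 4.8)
The plan is to combine the uniform estimates of Proposition~\ref{pr:uniform} with an Arzel\`a--Ascoli argument and a passage to the limit in~(\ref{flambda}). The key preliminary remark is that Proposition~\ref{pr:uniform} already controls the behaviour near $\pm\infty$ \emph{uniformly in $\lambda$}: given $\e>0$ there is $K<\infty$ with $f_\lambda(x)\leq\e$ for all $x>K$ and $1-f_\lambda(x)\leq\e$ for all $x<-K$, simultaneously for every $0<\lambda<\infty$. Hence relative compactness for uniform convergence on $\R$ reduces to relative compactness for uniform convergence on each compact interval $[-K,K]$: if $f_{\lambda_n}\to f$ uniformly on every such interval, then $\|f_{\lambda_n}-f\|_{\infty,\R}\to 0$ as well, since outside $[-K,K]$ both functions stay within $\e$ of $0$, respectively of $1$.

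For the compact part, I would prove equicontinuity. As $\d>1$, the fixed point $f_\lambda=Tf_\lambda$ is differentiable and, just as in the proof of Proposition~\ref{pr:truncated},
\begin{eqnarray*}
f_\lambda'(x) & = & -\d(\d-1)\,f_\lambda(x)\int_{-x}^{\lambda}(x+y)^{\d-2}f_\lambda(y)\,dy,
\end{eqnarray*}
and I would bound this uniformly in $\lambda$ on compacts. For $x\leq 0$ the integration variable satisfies $y\geq|x|\geq 0$, so the bound $f_\lambda(y)\leq\exp(-y^\d/e)$ and the substitution $v=x+y$ give $|f_\lambda'(x)|\leq\d(\d-1)\int_0^{\infty}v^{\d-2}e^{-v^\d/e}\,dv$, a finite constant (integrability at $v=0$ uses $\d>1$, the tail is super-exponential). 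For $x\geq 0$ I would split at $y=0$: the contribution of $[-x,0]$ is at most $\int_{-x}^{0}(x+y)^{\d-2}\,dy=x^{\d-1}/(\d-1)$, and that of $[0,\lambda]$ is at most $\int_0^{\infty}(x+y)^{\d-2}e^{-y^\d/e}\,dy$, which is $O(1)$ when $\d\leq 2$ and $O(1+x^{\d-2})$ when $\d\geq 2$. In all cases $\sup_\lambda\|f_\lambda'\|_{\infty,[-K,K]}<\infty$, so $\{f_\lambda\}$ is equi-Lipschitz on $[-K,K]$; together with $0\leq f_\lambda\leq1$ this gives, via Arzel\`a--Ascoli, the desired relative compactness, and any subsequential limit $f$ is continuous, non-increasing, and inherits $f(0)\geq e^{-1}$, $f(-\infty)=1$, $f(+\infty)=0$ from the $f_\lambda$. (One can also avoid differentiation by using Helly's selection theorem on each $[-K,K]$ with a diagonal extraction over $K\in\N$, recovering continuity of the limit afterwards from the cavity equation established below.)

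To identify the limit, let $\lambda_n\to\infty$ with $f_{\lambda_n}\to f$ uniformly on $\R$, fix $x\in\R$, and pass to the limit in~(\ref{flambda}). The integrand $\d(x+y)^{\d-1}f_{\lambda_n}(y)\ind_{(-x\leq y\leq\lambda_n)}$ converges pointwise to $\d(x+y)^{\d-1}f(y)\ind_{(y\geq-x)}$ and is dominated, uniformly in $n$, by $\d(x+y)^{\d-1}\ind_{(-x\leq y\leq0)}+\d(x+y)^{\d-1}e^{-y^\d/e}\ind_{(y>0)}$, which is integrable precisely because $\d-1>0$ precludes any singularity at $y=-x$ and Proposition~\ref{pr:uniform} controls the tail. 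Dominated convergence then gives $f(x)=\exp\big(-\int_{-x}^{\infty}\d(x+y)^{\d-1}f(y)\,dy\big)$, i.e.\ $f$ solves~(\ref{rde}); the same computation with $\lambda_n\to\lambda^\ast\in(0,\infty)$ shows that any other subsequential limit equals $f_{\lambda^\ast}$ by Proposition~\ref{pr:truncated}, though only the case $\lambda\to\infty$ matters in the sequel. I expect the main obstacle to be the $\lambda$-uniform equicontinuity on compacts: this is the single point where the opposing effects of the exponent — the mild singularity of $(x+y)^{\d-2}$ at $x+y=0$ for $1<\d<2$ and its polynomial growth for $\d>2$ — must both be absorbed, which is possible exactly because $\d>1$ renders the relevant integrals convergent while Proposition~\ref{pr:uniform} makes $f_\lambda$ decay super-exponentially away from the origin; everything else is a routine use of Arzel\`a--Ascoli and dominated convergence.
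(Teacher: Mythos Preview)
Your argument is correct. It differs from the paper's in how relative compactness is obtained: the paper invokes Helly's selection principle (bounded monotone functions admit pointwise convergent subsequences), then identifies the limit as a solution of~(\ref{rde}) by dominated convergence, and finally upgrades pointwise to uniform convergence via the ``second Dini theorem'' (a sequence of monotone functions converging pointwise to a continuous limit converges uniformly on compacta), with the tails handled by Proposition~\ref{pr:uniform}. You instead work harder up front, bounding $f_\lambda'$ uniformly on compacts so that Arzel\`a--Ascoli yields uniform convergence on compacta directly; continuity of the limit is then automatic, and you close with the same dominated-convergence identification. Your alternative parenthetical (Helly plus continuity recovered from the limiting equation) is in fact exactly the paper's route. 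Either approach is fine; the Helly/Dini argument is shorter because it exploits monotonicity twice and avoids the case analysis on $\d\lessgtr 2$, whereas your derivative estimate gives a quantitative equi-Lipschitz bound that could be useful if one wanted more, e.g.\ regularity of the limit beyond mere continuity. One small caveat: the extended $f_\lambda$ has a jump at $x=\lambda$, so your uniform bound on $f_\lambda'$ over $[-K,K]$ only holds for $\lambda>K$; this is harmless since only $\lambda\to\infty$ matters, but it means the family $\{f_\lambda:0<\lambda<\infty\}$ is not literally equicontinuous.
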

\begin{proof}Let $\{\lambda_n\}_{n\geq 0}$ be any sequence of positive numbers such that $\lambda_n\to\infty$ as $n\to\infty$. By Helly's compactness principle for uniformly bounded monotone functions (see e.g. \cite[Theorem 36.5]{KoFo75}), there exists an increasing sequence $\{n_k\}_{k\geq 0}$ in $\N$ and a non-increasing function $f\colon\R\to[0,1]$ such that 
\begin{eqnarray}
\label{helly}
f_{\lambda_{n_k}}(x) & \xrightarrow[k\to\infty]{} & f(x),
\end{eqnarray}
for all $x\in\R$. Thanks to the first inequality in Proposition \ref{pr:uniform}, we may invoke dominated convergence to deduce that for each $x\in\R$, 
\begin{eqnarray*}
\int_{-x}^{\lambda_{n_k}}f_{\lambda_{n_k}}(y)(x+y)^{\d-1}dy & \xrightarrow[k\to\infty]{} & \int_{-x}^{+\infty}f(y)(x+y)^{\d-1}dy.
\end{eqnarray*}
Applying $u\mapsto\exp(-\d u)$ and recalling (\ref{flambda}), we see that
\begin{eqnarray*}
f(x) & = & \exp\left(-\d\int_{-x}^{+\infty}f(y)(x+y)^{\d-1}dy\right),
\end{eqnarray*}
which shows that $f$ must solve the cavity equation (\ref{rde}). This identity easily implies that $f$ is continuous. Consequently, the convergence (\ref{helly}) is uniform in $x\in\R$, by Dini's Theorem. 
\end{proof}

\section{The un-truncated cavity equation $(\lambda=\infty)$}

To conclude the proof of Theorem \ref{th:main}, it now remains to show that the un-truncated equation
\begin{eqnarray}
\label{Tinfty}
f(x) & = & \exp\left(-\d\int_{-x}^{+\infty}(x+y)^{\d-1} f(y)dy\right).
%\\ \label{Tinfty2} & = & \exp\left(-\d\int_{0}^{+\infty}y^{\d-1} f(y-x)dy\right).
\end{eqnarray} admits at most one fixed point $f\colon\R\to[0,1]$. Proposition \ref{pr:tight} will then guarantee the convergence $f_\lambda\xrightarrow[\lambda\to\infty]{} f$, which will in turn imply
\begin{eqnarray*}
\int_{-\lambda}^\lambda f_\lambda(x)\ln{f_\lambda(x)}dx & \xrightarrow[\lambda\to+\infty]{} & \int_{\R}f(x)\ln{f(x)}dx,
\end{eqnarray*}
by dominated convergence, thanks to the last inequalities in Proposition \ref{pr:uniform}. 

A quick inspection of the proof of Proposition \ref{pr:uniform} reveals that it remains valid when $\lambda=\infty$. In particular, any solution $f$ to (\ref{Tinfty}) must satisfy 
\begin{eqnarray}
\label{unif}
\max(f(x),1-f(-x)) & \leq & \exp\left(-\frac{x^\d}{e}\right),
\end{eqnarray}
for all $x\geq 0$. It also clear from (\ref{Tinfty}) that $f$ must be $(0,1)-$valued and  continuous. We will use those properties in the proofs below. 
\begin{lemma}
\label{lm:shift}
If $f,g$ solve (\ref{Tinfty}), then there exists $t\geq 0$ such that for all $x\in\R$, 
$$f(x+t)\leq g(x)\leq f(x-t).$$ 
\end{lemma}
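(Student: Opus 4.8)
The plan is to squeeze $g$ between two horizontal shifts of $f$ by controlling how far apart the inverse bijections $f^{-1},g^{-1}\colon(0,1)\to\R$ can be (note that a solution of \eqref{Tinfty} is continuous and strictly decreasing, hence a homeomorphism onto $(0,1)$). The starting point is a sharpening of Proposition~\ref{pr:uniform}: \emph{every} solution $h$ of \eqref{Tinfty} satisfies
\[
\exp\bigl(-x^{\d}-Cx^{\d-1}\bigr)\ \le\ h(x)\ \le\ \exp\bigl(-x^{\d}+Cx^{\d-1}\bigr)\qquad(x\ge x_0),
\]
for constants $C,x_0$ depending only on $\d$. To prove this I would split $\int_{-x}^{\infty}(x+y)^{\d-1}h(y)\,dy=\int_0^{x}u^{\d-1}h(u-x)\,du+\int_{x}^{\infty}u^{\d-1}h(u-x)\,du$: on the first piece $1-h(u-x)\le e^{-(x-u)^{\d}/e}$ by the second inequality of Proposition~\ref{pr:uniform}, so it equals $x^{\d}/\d$ up to an $O(x^{\d-1})$ error, while on the second piece $h(u-x)\le e^{-(u-x)^{\d}/e}$, which again contributes only $O(x^{\d-1})$; multiplying by $-\d$ and exponentiating yields both bounds. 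The essential feature — and what makes the lemma true at all — is that the \emph{leading} constant in the exponent is exactly $1$, the same for $f$ and $g$: if the exponents of $f$ and $g$ had different leading constants, no finite $t$ could work.

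Inverting the above by a routine bootstrap gives $h^{-1}(u)=\bigl(\ln\tfrac1u\bigr)^{1/\d}+O(1)$ as $u\to0^{+}$, uniformly over all solutions $h$. Applying this to $f$ and reusing the displayed bounds for $g$ (which give $\bigl(\ln\tfrac1{g(x)}\bigr)^{1/\d}=x+O(1)$) shows $\bigl|f^{-1}(g(x))-x\bigr|=O(1)$ once $x$ exceeds a fixed threshold; on the remaining compact range this quantity is bounded by continuity. Hence $t:=\sup_{x\ge 0}\bigl|f^{-1}(g(x))-x\bigr|<\infty$, and applying the decreasing map $f$ yields
\[
f(x+t)\ \le\ g(x)\ \le\ f(x-t)\qquad\text{for all }x\ge 0 .
\]

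To extend this to $x\le 0$ I would use the reflected form of the equation: setting $x=-\rho$ in \eqref{Tinfty} and substituting $v=y-\rho$ gives $h(-\rho)=\exp\!\bigl(-\d\int_0^{\infty}v^{\d-1}h(v+\rho)\,dv\bigr)$ for every $\rho\ge 0$ and every solution $h$. Since the half-line bound above gives $g(v+\rho)\ge f(v+\rho+t)$ for all $v,\rho\ge 0$, comparing the integral representations of $g(-\rho)$ and of $f(-\rho-t)$ forces $g(-\rho)\le f(-\rho-t)$; that is, $g(x)\le f(x-t)$ for all $x\le 0$ too, so $g\le f(\cdot-t)$ on all of $\R$. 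Finally, letting $T$ be the operator given by the right-hand side of \eqref{Tinfty}, one checks that $T$ is non-increasing and, by the change of variables $z=y-t$, that $T[f(\cdot-t)]=f(\cdot+t)$ whenever $f$ solves \eqref{Tinfty}; applying $T$ to $g\le f(\cdot-t)$ then gives $g=Tg\ge T[f(\cdot-t)]=f(\cdot+t)$ on $\R$, which is the remaining inequality.

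The hard part is the first step — establishing the tail asymptotics $h(x)=\exp(-x^{\d}+O(x^{\d-1}))$ with an implied constant that does \emph{not} depend on the solution. After that, everything reduces to comparing explicit exponential bounds, plus the two soft ingredients above (the reflection identity and the shift-covariance $T[f(\cdot-t)]=f(\cdot+t)$ of the cavity operator).
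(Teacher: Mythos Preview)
Your argument is correct, and the key steps---the uniform tail asymptotic $h(x)=\exp(-x^{\d}+O(x^{\d-1}))$, the reflection identity $h(-\rho)=\exp\bigl(-\d\int_0^\infty v^{\d-1}h(v+\rho)\,dv\bigr)$, and the shift-covariance $T[f(\cdot-t)]=f(\cdot+t)$---all check out with constants depending only on $\d$. It is, however, a genuinely different route from the paper's.

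The paper avoids sharp asymptotics entirely. It introduces the integrated difference $\Delta(t)=\int_{\R}(f(y-t)-g(y))\,dy$, observes that $\Delta$ is a continuous increasing bijection of $\R$, fixes any $t_0\ge 0$ with $\Delta(-t_0)<0<\Delta(t_0)$, and then uses the single limit
\[
x^{1-\d}\int_{-x}^{\infty}(x+y)^{\d-1}\bigl(f(y-t)-g(y)\bigr)\,dy\ \xrightarrow[x\to\infty]{}\ \Delta(t)
\]
(dominated convergence, requiring only the crude bound~(\ref{unif})) to conclude that $f(x+t_0)\le g(x)\le f(x-t_0)$ holds for all $|x|\ge a$. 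The compact middle $[-a,a]$ is handled by brute force: choose $t_1$ so large that $f(-a+t_1)\le g(a)$ and $g(-a)\le f(a-t_1)$, and take $t=\max(t_0,t_1)$.

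What each approach buys: the paper's is softer and shorter, needs nothing beyond integrability of $(1+|y|)(f(y-t)-g(y))$, and the function $\Delta$ is reused verbatim in the subsequent uniqueness proof. Your approach is more laborious at the outset but more informative---it actually pins down the tail of every solution to leading order---and your structural observation $T[f(\cdot-t)]=f(\cdot+t)$ is an elegant device for upgrading a one-sided global inequality to the two-sided one, whereas the paper treats the two inequalities symmetrically from the start.
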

\begin{proof} 
(\ref{unif}) ensures that for any $t\in\R$, $y\mapsto (1+|y|)(f(y-t)-g(y))$ is integrable on $\R$, so that by dominated convergence, 
\begin{eqnarray}
\label{lim}
\frac{1}{x^{\d-1}}\int_{-x}^{+\infty}(y+x)^{\d-1}\left(f(y-t)-g(y)\right) dy & \xrightarrow[x\to+\infty]{} &\Delta(t),
\end{eqnarray}
where 
\begin{eqnarray}
\label{Delta}
\Delta(t) & := & \int_{\R} \left(f(y-t)-g(y)\right) dy.
\end{eqnarray}
Observe that $t\mapsto\Delta(t)$ increases continuously from $-\infty$ to $+\infty$, as can be seen from the decomposition
\begin{eqnarray*}
\Delta(t) & = &  \int_{0}^{+\infty} (1- g(-y)-g(y))dy +\int_{-t}^{+\infty} f(y)dy -  \int_{t}^{+\infty} (1- f(-y))dy.
\end{eqnarray*}
In particular, we can find $t_0\geq 0$ such that $\Delta(-t_0)<0<\Delta(t_0)$. In view of (\ref{lim}), we deduce the existence of $a\geq 0$ such that for all $x\geq a$,
\begin{eqnarray}
\label{aa}
\int_{-x}^{+\infty}(y+x)^{\d-1}g(y) dy & \geq & 
 \int_{-x}^{+\infty}(y+x)^{\d-1}f(y+t_0)dy\\
 \label{bb}
 \int_{-x}^{+\infty}(y+x)^{\d-1}g(y) dy & \leq & 
 \int_{-x}^{+\infty}(y+x)^{\d-1}f(y-t_0)dy.
\end{eqnarray}
Applying $u\mapsto\exp(-\d u)$, we conclude that for all $x\geq a$,  
\begin{eqnarray}
\label{a}
 f(x+t_0) \ \leq & g(x) & \leq f(x-t_0).
\end{eqnarray}
In turn, this implies that  (\ref{aa})-(\ref{bb}) also hold when $x\leq -a$, so that (\ref{a}) actually holds for all $x$ outside $(-a,a)$. On the other hand, since $g$ is $(0,1)-$valued and $f$ has limits $0,1$ at $\pm\infty$, we can choose $t_1\geq 0$ large enough so that 
\begin{eqnarray*}
 f(-a+t_1) \ \leq \ g(a) & \leq & g(-a)\ \leq \ f(a-t_1).
\end{eqnarray*}
Since $f,g$ are non-increasing, this inequality implies that for all $x\in[-a,a]$,
\begin{eqnarray}
\label{b}
 f(x+t_1) \ \leq & g(x) & \leq \ f(x-t_1).
\end{eqnarray}
In view of (\ref{a})-(\ref{b}), taking $t:=\max(t_0, t_1)$ concludes the proof. 
\end{proof}

\begin{proof}[Proof of Proposition \ref{pr:tight}]
Let $f,g$ solve equation (\ref{Tinfty}) and let $t$ be the smallest non-negative number satisfying for all $x\in\R$,\begin{eqnarray}
\label{initial}
 f(x+t) & \leq \ g(x) \ \leq & f(x-t).
\end{eqnarray}
Note that $t$ exists by Lemma \ref{lm:shift} and the continuity of $f$. Now assume for a contradiction that $t>0$. Clearly, each of the two inequalities in (\ref{initial}) must be strict at some point $x\in\R$ (and hence on some open interval by continuity), otherwise we would have $g\geq f$ or $g\leq f$ and (\ref{Tinfty}) would then force $g=f$, contradicting the assumption that $t>0$. Consequently, the function $\Delta$ defined in (\ref{Delta}) must satisfy $\Delta(-t) <  0  <  \Delta(t).$
By continuity of $\Delta$, there must exists $t_0<t$ such that 
$\Delta(-t_0)<0<\Delta(t_0)$. As  we have already seen, this inequality implies  
\begin{eqnarray}
\label{t0}
 f(x+t_0) & \leq \ g(x) \ \leq & f(x-t_0),
\end{eqnarray}
for all $x$ outside some compact $[-a,a]$. In particular, we now see that the inequalities in (\ref{initial}) must be strict for all large enough $x$. Thus, for all $x\in\R$, 
\begin{eqnarray*}
\int_{-x}^{+\infty}(y+x)^{\d-1}g(y) dy & > & 
 \int_{-x}^{+\infty}(y+x)^{\d-1}f(y+t)dy\\
 \int_{-x}^{+\infty}(y+x)^{\d-1}g(y) dy & < & 
 \int_{-x}^{+\infty}(y+x)^{\d-1}f(y-t)dy.
\end{eqnarray*}
Applying $u\mapsto\exp(-\d u)$ now shows that the inequalities in (\ref{initial}) must actually be strict everywhere on $\R$, hence in particular on the compact $[-a,a]$. By uniform continuity, there must exists $t_1<t$ such that 
 \begin{eqnarray}
 \label{t1}
 f(x+t_1) & \leq \ g(x) \ \leq & f(x-t_1),
\end{eqnarray}
for all $x\in [-a,a]$. In view of (\ref{t0})-(\ref{t1}), the number $t':=\max(t_0,t_1)$  
now contradicts the minimality of $t$. 
\end{proof}

\bibliographystyle{plain}
\bibliography{draft}

\end{document}